\documentclass[12pt,a4paper]{article}
\setlength\textwidth{145mm}
\setlength\textheight{247mm}
\setlength\oddsidemargin{15mm}
\setlength\evensidemargin{15mm}
\setlength\topmargin{0mm}
\setlength\headsep{0mm}
\setlength\headheight{0mm}

\usepackage[english]{babel}
\usepackage[latin1]{inputenc}
\usepackage{times}
\usepackage[T1]{fontenc}

\usepackage{amsmath,amsfonts,amssymb,amsthm} 
\usepackage{tikz}
\usepackage{hyperref}
\usepackage[all]{xy}

\newtheorem{theorem}{Theorem}
\newtheorem{proposition}{Proposition}

\newtheorem{claim}{Claim}

\theoremstyle{definition}
\newtheorem{question}{Question}
\newtheorem{definition}{Definition}

\newcommand{\supp}{\operatorname{supp}}
\newcommand{\Kat}{Kat\v{e}tov}
\newcommand{\Fra}{Fra\"\i ss\'e}
\newcommand{\Flim}{\operatorname{Flim}}
\newcommand{\Aut}{\operatorname{Aut}}
\newcommand{\Emb}{\operatorname{Emb}}

\author{Jan Greb\'ik\footnote{Research supported by GA\v CR project 16-34860L and RVO:
67985840.}\\
{\small Institute of Mathematics, Czech Academy of Sciences}
}
\title{An example of a Fra\"iss\'e class without a Kat\v etov functor}

\begin{document}

\maketitle

\begin{abstract}
We disprove a conjecture from~\cite{KubMas} by showing the existence of a \Fra\ class $\mathcal{C}$ which does not admit a \Kat\ functor.
On the other hand, we show that the automorphism group of the \Fra\ limit of $\mathcal{C}$ is universal, as it happens in the presence of a \Kat\ functor.

\medskip

\noindent
{\bf MSC (2010):} 03C50, 18A22.

\noindent
{\bf Keywords:} \Kat\ functor, \Fra\ limit.
\end{abstract}

\section{Introduction}
Let us recall some basic facts. A \Fra\ class $\mathcal{C}$ is a countable class of finitely generated structures for some countable language $\mathcal{L}$ together with embeddings which satisfy joint embedding property, amalgamation property, hereditary property. Let us denote by $\sigma\mathcal{C}$ the class of all countably generated structures that are colimits of countable chains from $\mathcal{C}$. Obviously $\mathcal{C}\subseteq \sigma\mathcal{C}$. 
\Fra\ theorem says that there exists a unique homogeneous structure $\Flim(\mathcal{C}) \in \sigma\mathcal{C}$, called the \emph{\Fra\ limit} of $\mathcal{C}$, which is universal for $\sigma\mathcal{C}$. This means that every isomorphism between finitely generated substructures of $\Flim(\mathcal{C})$ extends to an automorphism of $\Flim(\mathcal{C})$, and every structure from $\mathcal{C}$ embeds into $\Flim(\mathcal{C})$.
For more information see, e.g.~\cite{Hod}.

In~\cite{KubMas} the authors define a notion of a \Kat\ functor. Existence of such a functor leads to a uniform way of obtaining the \Fra\ limit and to prove, for example, that $\Aut(\Flim(\mathcal{C}))$ is universal for all $\Aut(X)$ where $X\in \sigma\mathcal{C}$. Let us recall the definition.
\begin{definition}
We say that $K\colon \mathcal{C}\to\sigma\mathcal{C}$ is a \Kat\ functor if 
\begin{itemize}
\item $K$ is a covariant functor,
\item there is a natural transformation $\{\eta_x\}_{x\in\mathcal{C}}$ between $id_{\mathcal{C}}$ and $K$,
$$\xymatrix{
x \ar[d]_f \ar[rr]^{\eta_x} & & K(x) \ar[d]^{K(f)} \\
y \ar[rr]_{\eta_y} & & K(y)}$$
\item every one point extension $y\in \mathcal{C}$ of $x\in \mathcal{C}$ is realized in $K(x)$ over $\eta_x(x)$.
$$\xymatrix{
x \ar[d] \ar[rr] & & K(x) \\
y \ar[rru] & &
}$$
\end{itemize}
\end{definition}
Every \Kat\ functor $K$ naturally extends to a functor $K\colon\sigma\mathcal{C}\to\sigma\mathcal{C}$ with similar properties, i.e., there is a natural transforamtion between $id_{\sigma\mathcal{C}}$ and $K$, every one point extension $y\in \mathcal{C}$ of $x\in \mathcal{C}$ where $x\subseteq X\in\sigma\mathcal{C}$ is realized in $K(X)$ over $\eta_X(x)$. Moreover, iterating the \Kat\ functor $\omega-$many times leads to another \Kat\ functor $K^{\omega}\colon\sigma\mathcal{C}\to\sigma\mathcal{C}$ such that for all $X\in \sigma\mathcal{C}$ it holds that $K(X)=\Flim(\mathcal{C})$. Easy observation (using functoriality) then gives us that $\Aut(X)$ embeds into $\Aut(\Flim(\mathcal{C}))$ and $\Emb(X,Y)$ embeds into $\Emb(\Flim(\mathcal{C}))$ i.e. $K^{\omega}\colon\sigma\mathcal{C}\to \{\Flim(\mathcal{C})\}$ is a faithful functor.  See~\cite{KubMas} for more information. It was an open problem whether there is a \Fra\ class without a \Kat\ functor. In this note we prove that the class of linearly ordered finite sets with colorings of pairs by countably many colors without monochromatic triangles is a \Fra\ class without a \Kat\ functor, however it admits a faithful functor as above.

\section{The construction}
Let us fix some \Fra\ class $\mathcal{C}$ and denote its closure on colimits of countable chains by $\sigma\mathcal{C}$. When we have a \Kat\ functor $K\colon\sigma\mathcal{C}\to\sigma\mathcal{C}$ we will always assume that $K(X)$ is an extension of $X$ i.e. $\eta_X$ is inclusion. Let us denote a one point extension $y\in \mathcal{C}$ of $x\in \mathcal{C}$ by $y = \langle x,t\rangle$ (meaning that $t$ is a single generator). Similarly, $\langle X,t\rangle$ will denote a one point extension of $X\in\sigma\mathcal{C}$, namely, a structure in $\sigma\mathcal{C}$ generated by $X \cup \{t\}$.

\begin{definition}
Let $x,y\in\mathcal{C}$ such that $x\subseteq y$ and $X,C\in \sigma\mathcal{C}$. Consider the following commutative diagram.
$$\xymatrix{
y \ar[rr]^{e_2} & & C \\
x \ar[u]^{i} \ar[rr]_{e_1} & & X \ar[u]_{f}
}$$
Then we say that $C$ is a \emph{homogeneous extension} of $X$ over $x\subseteq y$ if for all $\alpha\in \Aut(X)$ such that $\alpha\upharpoonright x=id_x$, there is $\beta \in \Aut(C)$ such that $\beta \upharpoonright y=id_y$ and the following diagram commutes.
$$\xymatrix{
y \ar[rr]_{e_2} & & C \ar[rr]_{\beta} & & C\\
x \ar[u]^{i} \ar[rr]_{e_1} & & X \ar[u]_{f} \ar[rr]_{\alpha} & & X \ar[u]^{f}\\
}$$
\end{definition}
The most important case is when $C$ is generated by $X\cup y$ and $y$ is generated by one element over $x$. In that case we in fact deal with extending one point extension $\langle x,t\rangle$ to a homogeneous one point extension $\langle X,t\rangle$. 

\begin{proposition}
Assume that there is a \Kat\ functor $K$ for $\mathcal{C}$. Then for all $x\in \mathcal{C}$, all one point extensions $\langle x,t\rangle\in \mathcal{C}$ and all embeddings $e:x\to \Flim(\mathcal{C})$ there exists a homogeneous one point extension $\langle \Flim(\mathcal{C}),t\rangle$ of $\Flim(\mathcal{C})$ over $x\subseteq \langle x,t\rangle$.
\end{proposition}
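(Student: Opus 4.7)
The plan is to realize the desired extension inside $K(\Flim(\mathcal{C}))$ and to produce the required automorphisms by functorially applying $K$ to automorphisms of $\Flim(\mathcal{C})$. First, since $K(x)$ realizes the one-point extension $\langle x,t\rangle$ over $\eta_x(x)=x$, fix a witness $t_0\in K(x)$ such that the substructure of $K(x)$ generated by $x\cup\{t_0\}$ is isomorphic over $x$ to $\langle x,t\rangle$. Now push $t_0$ forward: apply $K$ to the embedding $e\colon x\to\Flim(\mathcal{C})$ (using the extension of $K$ to $\sigma\mathcal{C}$) to obtain $K(e)\colon K(x)\to K(\Flim(\mathcal{C}))$, and set $t':=K(e)(t_0)$. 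Naturality of $\eta$, combined with the running assumption that each $\eta_X$ is an inclusion, gives $K(e)\upharpoonright x=e$, so the substructure $\langle e(x),t'\rangle\subseteq K(\Flim(\mathcal{C}))$ is a copy of $\langle x,t\rangle$ over $e(x)$. Take $C$ to be the substructure of $K(\Flim(\mathcal{C}))$ generated by $\Flim(\mathcal{C})\cup\{t'\}$, and let $e_2\colon\langle x,t\rangle\to C$ be the embedding sending $t$ to $t'$ and agreeing with $e$ on $x$; this is the candidate homogeneous one-point extension $\langle\Flim(\mathcal{C}),t\rangle$.

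For homogeneity, fix $\alpha\in\Aut(\Flim(\mathcal{C}))$ with $\alpha\upharpoonright e(x)=\mathrm{id}$, equivalently $\alpha\circ e=e$. Functoriality gives $K(\alpha)\in\Aut(K(\Flim(\mathcal{C})))$, and naturality of $\eta_{\Flim(\mathcal{C})}$ together with the inclusion convention yields $K(\alpha)\upharpoonright\Flim(\mathcal{C})=\alpha$. The key computation is $K(\alpha)(t')=K(\alpha)\bigl(K(e)(t_0)\bigr)=K(\alpha\circ e)(t_0)=K(e)(t_0)=t'$. Thus $K(\alpha)$ preserves the generating set $\Flim(\mathcal{C})\cup\{t'\}$ of $C$ setwise; the same argument applied to $\alpha^{-1}$ shows that $K(\alpha)$ restricts to an automorphism $\beta$ of $C$. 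By construction $\beta\circ f=f\circ\alpha$, where $f\colon\Flim(\mathcal{C})\to C$ is the inclusion, and $\beta$ fixes $e(x)\cup\{t'\}$, hence the whole image $e_2(\langle x,t\rangle)$, pointwise. This is exactly the data required by the definition of a homogeneous extension.

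The only non-routine step is the choice of the witness $t'$ inside $K(\Flim(\mathcal{C}))$: an arbitrary realization of $\langle x,t\rangle$ over $e(x)$ would in general be mapped by $K(\alpha)$ to some other realization of the same one-point type, not back to itself, so naive choices would break homogeneity. The trick that makes the argument work is to take $t'$ as the functorial image, under $K(e)$, of a witness already living in $K(x)$; then the hypothesis $\alpha\upharpoonright e(x)=\mathrm{id}$ upgrades to the equality of morphisms $\alpha\circ e=e$, and functoriality of $K$ forces $K(\alpha)(t')=t'$ for free. Everything else reduces to standard bookkeeping with the natural transformation $\eta$ and with the extension of $K$ to $\sigma\mathcal{C}$ recalled in the introduction.
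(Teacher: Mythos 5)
Your proposal is correct and follows essentially the same route as the paper: realize $\langle x,t\rangle$ in $K(x)$, transport the witness via $K(e)$ into $K(\Flim(\mathcal{C}))$, and use functoriality together with $\alpha\circ e=e$ to force $K(\alpha)$ (and $K(\alpha)^{-1}$) to fix the witness and hence restrict to the required automorphism of $\langle\Flim(\mathcal{C}),t'\rangle$. Your closing remark about why an arbitrary realization would fail is a helpful gloss, but the underlying argument is the paper's own.
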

\begin{proof}
Assume we have the following commutative diagram, where $\alpha$ is an automorphism.
$$\xymatrix{
x \ar[rr]^{e} & & \Flim(\mathcal{C}) \\
x \ar[u]^{id} \ar[rr]_{e} & & \Flim(\mathcal{C}) \ar[u]_{\alpha}
}$$
This diagram can be moved by our \Kat\ functor $K$ to the following commutative diagram
$$\xymatrix{
K(x) \ar[rr]^{K(e)} & & K(\Flim(\mathcal{C})) \\
K(x) \ar[u]^{id} \ar[rr]_{K(e)} & & K(\Flim(\mathcal{C})) \ar[u]_{K(\alpha)}
}$$
Where $K(\alpha)$ is again an automorphism, and these two diagrams are connected by the natural transformation $\eta$. Then $\langle x,t\rangle$ can be embedded to $K(x)$ by the definition of $K$, so we may assume that this embedding is inclusion and put $e(t):=K(e)(t)$. By commutativity we see that $K(\alpha)(e(t))=e(t)$. So $K(\alpha)$ and $K(\alpha)^{-1}$ are invariant on $\langle \Flim(\mathcal{C}),e(t)\rangle$ which is exactly what we needed to prove.
\end{proof}

Another consequence of the existence of a \Kat\ functor is a nontrivial pair of embeddings $e\colon\Flim(\mathcal{C})\to \Flim(\mathcal{C})$  and $E\colon\Aut(\Flim(\mathcal{C}))\to \Aut(\Flim(\mathcal{C}))$ such that the following diagram 
$$\xymatrix{
\Flim(\mathcal{C}) \ar[rr]^{e} & & \Flim(\mathcal{C}) \\
\Flim(\mathcal{C}) \ar[u]^{\alpha} \ar[rr]_{e} & & \Flim(\mathcal{C}) \ar[u]_{E(\alpha)}
}$$
commutes for all $\alpha\in\Aut(\Flim(\mathcal{C}))$. Assume that we have such a pair $(e,E)$, fix $x\subseteq \Flim(\mathcal{C})$ and $t\in \Flim(\mathcal{C})\setminus e[\Flim(\mathcal{C})]$. Define 
$$\mathcal{G}:=\{\alpha\in \Aut(\Flim(\mathcal{C})):E(\alpha)\upharpoonright \langle x,t\rangle=id_{\langle x,t\rangle}\}.$$
This is an open subgroup of $\Aut(\Flim(\mathcal{C}))$ because it is an analytic subgroup (i.e. it has the Baire property) and it has countable index. So there is $x\subseteq x'\subseteq \Flim(\mathcal{C})$ such that $\mathcal{H}:=\{\alpha\colon\alpha\upharpoonright x'=id_{x'}\}\le \mathcal{G}$ i.e. $\langle\Flim(\mathcal{C}),t\rangle$ is a one point homogeneous extension of $\Flim(\mathcal{C})$ over $\langle x', t\rangle$. Hence we may say that for a nontrivial pair of embeddings $(e,E)$ and every one point extension $\langle x,t\rangle$ such that $x\subseteq \Flim(\mathcal{C})$ and $t$ is realized in $\Flim(\mathcal{C})$ over $e[x]$ outside $e[\Flim(\mathcal{C})]$ there are $x'\supseteq x$ and a homogeneous one point extension $\langle \Flim(\mathcal{C}),t \rangle$ of $\Flim(\mathcal{C})$ over $\langle x',t\rangle$. In particular, if $(e,E)$ is nontrivial than there are nontrivial homogeneous one point extensions. 

As it was mentioned in the introduction, iterating $\omega$ many times a fixed \Kat\ functor $K$ leads to the functor $K^{\omega}$ such that $K^{\omega}(X)=\Flim(\mathcal{C})$ for every $X\in \sigma \mathcal{C}$. In particular, $K^{\omega}\colon\sigma\mathcal{C}\to\{\Flim(\mathcal{C})\}$ is a faithful functor.

\begin{theorem}\label{theorem}
There exists a \Fra \ class $\mathcal{C}$ without a \Kat\ functor, yet with a faithful functor from $\sigma\mathcal{C}$ to $\{\Flim(\mathcal{C})\}$.
\end{theorem}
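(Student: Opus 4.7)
I would take $\mathcal{C}$ to be the class of finite structures $(A,<,c)$ where $<$ is a linear order on $A$ and $c\colon\binom{A}{2}\to\omega$ is a coloring of unordered pairs containing no monochromatic triangle, with morphisms the $<$- and $c$-preserving injections. The hereditary and joint embedding properties are routine. For amalgamation, given $x\hookrightarrow y_1$ and $x\hookrightarrow y_2$, identify the two copies of $x$, extend the linear orders consistently, and assign a single fresh color $c_0$ (not used anywhere in $y_1\cup y_2$) to every pair in $(y_1\setminus x)\times(y_2\setminus x)$. Any triangle in the resulting amalgam either lies entirely in $y_1$ or in $y_2$ (hence is not monochromatic by hypothesis) or contains at least one ``old'' edge with color different from $c_0$, so no new monochromatic triangle appears.

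\textbf{No \Kat\ functor.} I would apply the previous proposition to $x=\emptyset$ with the one-point extension $\langle\emptyset,t\rangle\in\mathcal{C}$ (a singleton). A \Kat\ functor would yield a homogeneous one-point extension $\langle\Flim(\mathcal{C}),t\rangle$; by homogeneity every $\alpha\in\Aut(\Flim(\mathcal{C}))$ extends to an automorphism of the extension fixing $t$, so $c(t,\alpha(p))=c(t,p)$ for all $\alpha,p$. Since any two singletons in $\Flim(\mathcal{C})$ are isomorphic substructures, \Fra\ homogeneity makes $\Aut(\Flim(\mathcal{C}))$ transitive on points, and hence $p\mapsto c(t,p)$ is a constant $c_0\in\omega$. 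But the two-point $\mathcal{C}$-structure with edge color $c_0$ embeds into $\Flim(\mathcal{C})$ by universality, producing $p,q\in\Flim(\mathcal{C})$ with $c(p,q)=c_0$; then $\{t,p,q\}\subseteq\langle\Flim(\mathcal{C}),t\rangle$ is a monochromatic triangle inside $\sigma\mathcal{C}$, a contradiction.

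\textbf{Faithful functor, and the main obstacle.} For each $X\in\sigma\mathcal{C}$ I would fix an embedding $\iota_X\colon X\to\Flim(\mathcal{C})$; on objects, set $F(X)=\Flim(\mathcal{C})$, and on each embedding $f\colon X\to Y$ pick $F(f)\colon\Flim(\mathcal{C})\to\Flim(\mathcal{C})$ to be an endo-embedding extending $\iota_Y\circ f\circ\iota_X^{-1}$ on $\iota_X[X]$. Such $F(f)$ exists by homogeneity plus universality of $\Flim(\mathcal{C})$, and faithfulness is automatic because the fixed $\iota_X,\iota_Y$ let one recover $f$ from $F(f)\upharpoonright\iota_X[X]$. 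The substantive difficulty is functoriality $F(g\circ f)=F(g)\circ F(f)$, which a naive back-and-forth does not provide.

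My plan for functoriality is to construct the family $\{F(f)\}$ by transfinite recursion along a well-ordering of all morphisms of $\sigma\mathcal{C}$ (presented on underlying subsets of $\omega$), at each stage extending the partial system by a new $F(f_\alpha)$ that satisfies all compositional identities already imposed by previously defined values. The key lemma to prove is that this local extension step is always solvable: given finitely many committed embeddings of $\Flim(\mathcal{C})$ that we want $F(f_\alpha)$ to be consistent with, one can always realize them inside $\Flim(\mathcal{C})$ because the unbounded $\omega$-color palette absorbs any triangle-avoiding constraint --- the same mechanism that made amalgamation possible in the first step, and precisely the ingredient that the transitivity argument of step~2 had no direct use for. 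Establishing this extension lemma is the technical heart of the argument; once it is in hand, $F$ is a faithful functor, and restricting it to each $\Aut(X)$ also yields the universality of $\Aut(\Flim(\mathcal{C}))$ advertised in the abstract.
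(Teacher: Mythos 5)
Your choice of class, the amalgamation argument, and the refutation of a \Kat\ functor are all sound. The non-existence argument is in fact a clean special case of the paper's: the paper fixes an arbitrary $x\subseteq\Flim(\mathcal{C})$, picks $t_1$ realizing the type of $t$ over $x$, and uses saturation to find a $t_2$ with $c(t_2,t_1)=c(t,t_1)$ that is conjugate to $t_1$ over $x$, whence a homogeneous extension would force the monochromatic triangle $\{t,t_1,t_2\}$; your version with $x=\emptyset$ and transitivity of $\Aut(\Flim(\mathcal{C}))$ on points is the same mechanism and suffices to contradict the Proposition. (If your convention excludes the empty structure, run the identical argument with $x$ a singleton.)

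The faithful functor is where there is a genuine gap, and it is the technical heart of the theorem. Your transfinite recursion is not carried out, and the ``extension lemma'' you defer is precisely where the approach breaks down: when you come to define $F(f_\alpha)$, the value may already be overdetermined, since $f_\alpha$ can factor as $g\circ h$ through previously handled morphisms in many incompatible ways, and conversely choices made at stage $\alpha$ impose equations $F(g)\circ F(h)=F(g')\circ F(h')$ on morphisms not yet enumerated. There is no local amalgamation statement that resolves this global coherence problem; functoriality has to come from \emph{canonicity} of the construction, not from recursion. The paper achieves this by an explicit device your sketch does not contain: it introduces an increasing chain of classes $\mathcal{C}_0\subseteq\mathcal{C}_1\subseteq\cdots$ with strictly growing color sets $Q_i$, and builds a concrete functor $K_i\colon\sigma\mathcal{C}_i\to\sigma\mathcal{C}_{i+1}$ that adjoins to $x$ the set $\mathcal{O}_x$ of all partial one-point types, linearly ordered and colored by \emph{fresh} colors from $Q_{i+1}\setminus Q_i$ in a way that depends only on the isomorphism types of supports (so embeddings are automatically preserved) and provably creates no monochromatic triangle. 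Note that the passage to a larger color set is not cosmetic: a functor $K_0$ realizing all $\mathcal{C}_0$-types \emph{inside} $\sigma\mathcal{C}_0$ would be a \Kat\ functor, which you have just shown cannot exist. The composition $K_\omega=\cdots\circ K_1\circ K_0$ then lands in $\sigma\mathcal{C}_\omega$, and since $Q_\omega$ is again countably infinite, $\Flim(\mathcal{C}_\omega)\simeq\Flim(\mathcal{C}_0)$, giving the faithful functor onto $\{\Flim(\mathcal{C})\}$. Without this (or some equally canonical substitute), your step three remains an unproved assertion.
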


\begin{proof}
Let $Q$ be a countable set. Let us define the class $\mathcal{C}$. An element of $\mathcal{C}$ is a finite set $x$ with a linear order and with a function $c_x:[x]^2\to Q$ such that there are no monochromatic triangles. We will denote the coloring function $c_x$ always by $c$, omitting the subscript $x$. It can be easily seen that $\mathcal{C}$ is a \Fra \ class. 

Let us first prove that there are no homogeneous one point extensions of $\Flim(\mathcal{C})$. Let us fix $x\subseteq \Flim(\mathcal{C})$ and any one point extension $\langle x,t\rangle\in \mathcal{C}$. Let us assume that there is a homogeneous one point extension $\langle \Flim(\mathcal{C}), t\rangle$ of $\Flim(\mathcal{C})$ over $\langle x,t\rangle$. Let us pick any $t_1\in \Flim(\mathcal{C})$ which realizes the same type over $x$ as $t$. Let $q = c((t,t_1)) $. Using the saturation of $\Flim(\mathcal{C})$ we find an element $t_2\in \Flim(\mathcal{C})$ such that $c((t_2,t_1))=q$ and the mapping $\alpha\colon\langle x,t_1\rangle \to \langle x,t_2\rangle$, defined by conditions $\alpha\upharpoonright x=id_x$ and $\alpha(t_1)=t_2$, is an isomorphism. Take any automorphism $\alpha_0\colon\Flim(\mathcal{C}) \to \Flim(\mathcal{C})$ which extends $\alpha$. From the definition of a homogeneous extension, $\alpha_0$ extends to $\langle\Flim(\mathcal{C}),t\rangle$ such that $\alpha_0(t)=t$. Then we have that $\{t,t_1,t_2\}$ is a monochromatic triangle, which is a contradiction. 

From the arguments above it follows that not only there is no \Kat\ functor for $\mathcal{C}$ but there is no nontrivial pair of embeddings $(e,E)$ for $\Flim(\mathcal{C})$ as well.

In order to prove that $\Emb(\Flim(\mathcal{C}))$ is universal for $\sigma\mathcal{C}$, let us define a suitable sequence of \Fra\ classes $\{\mathcal{C}_i\}_{i\le\omega}$. First let us fix a sequence of countable sets $\{Q_i\}_{i\le\omega}$ such that $|Q_{i+1}\setminus Q_i|=\omega$ and $Q_{\omega}=\bigcup_{i<\omega} Q_i$. The definition of $\mathcal{C}_i$ is similar as of $\mathcal{C}$, the only difference is that the set of colors is $Q_i$. We have that $\sigma\mathcal{C}_i\subseteq \sigma\mathcal{C}_{i+1}\subseteq \sigma \mathcal{C}_{\omega}$.
Now for each $i<\omega$ we will find a functor $K_i$ such that 
\begin{itemize}
\item $K_i\colon\mathcal{C}_i\to \sigma\mathcal{C}_{i+1}$,
\item there is a natural transformation $\nu$ for inclusion and $K_i$, i.e., $\nu_x:x\to K_i(x)$ for $x\in \mathcal{C}_i$,
\item for every $x\in \mathcal{C}_i$ every $\mathcal{C}_i-$type over $x$ is realized in $\nu_x[x]$.
\end{itemize}
The functor $K_i$ extends with all its properties (as in the case of \Kat\ functors) to $K_i\colon\sigma\mathcal{C}_i\to \sigma\mathcal{C}_{i+1}$. Once we have this, let us put 
$$K_{\omega}=...\circ K_i\circ ... \circ K_1\circ K_0\colon\sigma\mathcal{C}_0\to\sigma\mathcal{C}_{\omega}.$$
The functor $K_{\omega}$ is correctly defined thanks to the natural transformations for the functors $K_i$, and there is a natural transformation from the inclusion $\sigma\mathcal{C}_0 \subseteq \sigma\mathcal{C}_\omega$ to $K_{\omega}$.
Moreover, we have that $K_{\omega}(X)=\Flim(\mathcal{C}_{\omega})\simeq \Flim(\mathcal{C}_0)$, which proves our claim.

To finish the proof it is enough to describe $K_0$. Let us put $K:=K_0$, $\mathcal{C}:=\mathcal{C}_0$, $Q:=Q_0$, $\mathcal{D}:=\mathcal{C}_1$, $P:=Q_1$ and fix any $p\in P\setminus Q$. Let us define $K$ on objects. Take any $x\in \mathcal{C}$. Assume that we have fixed a linear ordering $<_{Q}$ on $Q$ isomorphic to the natural numbers. Denote by  $\mathcal{O}_x$ the set of all partial proper one-point extensions of $x$ from $\mathcal{C}$. We will use the same letter for a type and for its realization. To every $\xi\in \mathcal{O}_x$ let $\supp(\xi)\subseteq x$ denote the support of $\xi$, that is, the substructure of $x$ to which $\xi$ is added.
Let $K(x) = x \cup \mathcal{O}_x$.
Given an embedding $e \colon x \to y$, define $K(e)$ in the obvious way, namely, a partial type $\xi$ is mapped to the corresponding partial type with support $e[\supp(\xi)]$.
Next we turn $K(x)$ to be an element of $\sigma\mathcal{D}$ in such a way that $K(e)$ remains an embedding of structures whenever $e$ is an embedding.

Extend the coloring by putting $c(v,\xi)=p$ for $v\in x\setminus \supp(\xi)$ and extend the ordering on $\xi$ by declaring $\xi<v$ for $v\in x\setminus \supp(\xi)$ whenever it is consistent with the ordering on $\supp(\xi)\cup \{\xi\}$.
In the next step we define a linear ordering on $K(x)$.
Given $\xi\not=\psi\in \mathcal{O}_x$, let us define $\xi<\psi$ if one of the following conditions is satisfied.
\begin{enumerate}
\item[(1)] there is $v\in x$ such that $\xi<v<\psi$,
\item[(2)] condition (1) fails and $|\supp(\xi)|<|\supp(\psi)|$,
\item[(3)] conditions (1), (2) fail and the biggest $v\in \supp(\xi)\triangle \supp(\psi)$ satisfies $v\in \supp(\xi)$,
\item[(4)] none of the above is satisfied and for the biggest $v\in \supp(\xi)=\supp(\psi)$ for which $c_{\xi}((v,\xi))\not= c_{\psi}((v,\psi))$ we have that $c_{\xi}((v,\xi))<_{Q} c_{\psi}((v,\psi))$.
\end{enumerate}
It remains to define a coloring on $K(x)$ extending the coloring of $x$.
In order to do this, let us define an equivalence relation on pairs of elements from $\mathcal{O}_x$. We say that $\{\xi_0<\psi_0\}\sim \{\xi_1<\psi_1\}$ if there is an isomorphism between $\supp(\xi_i)\cup \supp(\psi_i)$ where $i \in \{0,1\}$ whose extension to $\{\xi_i<\psi_i\}$ remains an isomorphism. It is clear from the definition that if $\{\xi_0<\psi_0\}\sim \{\xi_1<\psi_1\}$ then $\supp(\xi_0) \simeq \supp(\xi_1)$ and $\supp(\psi_0) \simeq \sup(\psi_1)$.  Furthermore, the isomorphisms are unique, because of the linear orderings. It follows immediately:

\begin{claim}\label{ClaimOne}
For $f\colon x\to y$ we have that $\{\xi_0<\psi_0\}\sim \{\xi_1<\psi_1\}$ in $\mathcal{O}_x$ iff $\{K(f)(\xi_0)<K(f)(\psi_0)\}\sim \{K(f)(\xi_1)<K(f)(\psi_1)\}$ in $\mathcal{O}_y$.
\end{claim}

Let us now color the equivalence classes by induction on the size of $x$.
Assume that for all sets of size $<n$ we have already defined the coloring and take $x$ such that $|x|=n$. For the equivalence class of a pair $\{\xi_0<\psi_0\}$ use the color $r\in P$ if there is an embedding $f:y\to x$ such that $\{\xi_0<\psi_0\}$ is in the image of $K(f)$ and their preimage is colored by $r$.
This is well-defined by Claim~\ref{ClaimOne}.
Color the remaining equivalence classes by different (not already used) colors so that  infinitely many colors in $P$ are still left.

We see that whenever $f$ is an embedding of $\mathcal{C}$-structures then $K(f)$ respects both the ordering and the coloring. To finish the proof we must show that there are no monochromatic triangles in $K(x)$. Assume that there is one $\xi<\psi<\mu$.
This means that all pairs are in the same equivalence class.
There are isomorphisms $i,j$ which witness that for $\{\xi<\psi\}\sim \{\xi<\mu\}$ and $\{\xi<\mu\}\sim \{\psi<\mu\}$. We know that then $i$ is identity on $\supp(\xi)$ and $j$ is identity on $\supp(\mu)$.
Because the triangle is not degenerated, there is $v\in \supp(\psi)\setminus \supp(\mu)$, for such $v$ it holds that $w=i(v)\not= v$ and $j(i(v))=j(w)=w\not= v$. There must be some $z\in \supp(\xi)$ such that $j(z)=v$. Then we have that $c(\{v,z\})=c(\{w,z\})$ which is witnessed by $i$, and $c(\{v,w\})=c(\{z,w\})$ which is witnessed by $j$. This is a contradiction, because then $\{v,w,z\}$ is a monochromatic triangle in $x\in \mathcal{C}$.
\end{proof}

\section{Final remarks}

Our result suggests the following question: Is there a \Fra\ class without a faithful functor like in Theorem \ref{theorem}?

It can be easily verified that if $\sigma\mathcal{C}$ has push-outs, then for all $x\in\mathcal{C}$, all one-point extensions $\langle x,t\rangle\in\mathcal{C}$ and all embeddings $f:x\to \Flim{\mathcal{C}}$ there is a one-point homogenenous extension $\langle \Flim{\mathcal{C}},t\rangle$ of $\Flim{\mathcal{C}}$ over $\langle x,t\rangle$. That follows from the fact that the extension of $\Flim{\mathcal{C}}$ can be defined to be push out of the following diagram 
$$\xymatrix{
\langle x,t\rangle \ar[rr]^{i_0} & & \langle \Flim{\mathcal{C}},t\rangle \\
x \ar[u]^{i} \ar[rr]_{f} & & \Flim{\mathcal{C}} \ar[u]_{f_0}
}$$
one can easily verify that it is really an homogeneous one-point extension. This construction works for arbitrary object $X\in\sigma\mathcal{C}$ instead of $\Flim{\mathcal{C}}$. Let us finally state that the category of locally finite countable groups does not have a push out which brings us to the following concrete open question.

\begin{question}
Is there a \Kat\ functor for the \Fra\ class of finite groups? What can we say about one-point homogeneous extensions in this category?
\end{question}

\paragraph{Acknowledgments.} The author would like to thank Egbert Th\"ummel for careful reading the manuscript and for useful comments. The author is also grateful to Wies\l aw Kubi\'s for bringing the problem, fruitful discussions and help.


\begin{thebibliography}{9}

\bibitem{Hod} W. Hodges,  {\it Model theory}, Encyclopedia of Mathematics and its Applications, 42. Cambridge University Press, Cambridge, 1993.

\bibitem{KubMas} W. Kubi\'s, D. Ma\v sulovi\'c, {\it \Kat\ functors}, preprint, \url{http://arxiv.org/abs/1412.1850}

\end{thebibliography}
\end{document}